\numberwithin{equation}{section}
\theoremstyle{plain}
\newtheorem{theorem}{Theorem}[section]
\newtheorem{lemma}[theorem]{Lemma}
\newtheorem{corollary}[theorem]{Corollary}
\theoremstyle{definition}
\newtheorem{remark}[theorem]{Remark}
\begin{document}

\title[On contractions]{On contractions that are quasiaffine transforms of unilateral shifts}

\author{Maria F. Gamal'}
\address{
 St. Petersburg Branch\\ V. A. Steklov Institute 
of Mathematics\\
 Russian Academy of Sciences\\ Fontanka 27, St. Petersburg\\ 
191023, Russia  
}
\email{gamal@pdmi.ras.ru}

\thanks{Partially supported by RFBR grant No. 14-01-00748-a}

\subjclass[2010]{ Primary 47A45.}

\keywords{Contraction, similarity to an isometry, quasisimilarity, quasiaffine transform, unilateral shift}

\begin{abstract}It is known that if $T$ is a contraction of class $C_{10}$ and 
$I-T^\ast T$ is of trace class, 
then $T$ is a quasiaffine transform of a unilateral shift. 
Also it is known that if the multiplicity of
a unilateral shift is infinite, the converse is not true. 
In this paper the converse for a finite multiplicity
is proved: if $T$ is a contraction and $T$ is a quasiaffine transform 
of a unilateral shift of finite multiplicity,
then $I-T^\ast T$ is of trace class. As a consequence we obtain that 
if a contraction $T$ has finite multiplicity 
and its characteristic function has an outer left scalar multiple, 
then $I-T^\ast T$ is of trace class.

 Also, it is known that if a   contraction $T$ on a Hilbert space $\mathcal H$ is such that $\|b_\lambda(T)x\|\geq\delta\|x\|$ for every $\lambda\in\mathbb D$, $x\in\mathcal H$, with some $\delta>0$, and 
$$\sup_{\lambda\in\mathbb D}\|I-b_\lambda(T)^\ast b_\lambda(T)\|_{\frak S_1}<\infty$$ 
(here $b_\lambda$ is a Blaschke factor and $\frak S_1$ is the trace class of operators), then $T$ is similar to an isometry. 
In this paper the converse for a finite multiplicity
is proved: if $T$ is a contraction and $T$ is similar to an isometry of finite multiplicity, 
then $T$ satisfies the above conditions.
\end{abstract} 

\maketitle

\centerline{{\it Dedicated to the memory of Professor Yuri A. Abramovich}}

\section{Introduction}

Let $\mathcal H$ be a (complex, separable) Hilbert space, and let $T$ be 
a (linear, bounded) operator acting on 
$\mathcal H$. An operator $T$ is called a contraction if $\|T\|\leq 1$. 
It is well known that any contraction $T$ can be
uniquely decomposed  into the orthogonal sum 
$T=T_1\oplus U_{(a)}\oplus U_{(s)}$, where $T_1$ is a completely
nonunitary contraction, and $U_{(a)}$ and $U_{(s)}$ are absolutely 
continuous and singular (with respect to the 
Lebesgue measure on the unit circle) unitary operators, respectively 
(see {\cite[I.3.2]{17}}). A contraction $T$
is called {\it absolutely continuous (a.c.)}, if $U_{(s)}=\mathbb O$. 
For an a.c. contraction $T$ the 
Sz.-Nagy--Foias functional calculus is defined (see  {\cite[III.2.1]{17}}),
 that is, for any function 
$\varphi\in H^\infty$, where $H^\infty$ is the Banach algebra of 
bounded analytic functions on the open unit disk,
the operator $\varphi(T)$ acting on $\mathcal H$ is well-defined. 
An a.c. contraction $T$ is of class $C_0$
 ($T$ is a $C_0$-contraction), if there exists a function 
 $\varphi\in H^\infty$, such that $\varphi(T)=0$ and
$\varphi\not\equiv 0$. On $C_0$-contractions see \cite{2} and \cite{17}.

The {\it  multiplicity} $\mu(T)$ of an operator $T$ acting 
on a space $\mathcal H$ is the minimum dimension
of its reproducing subspaces: 
$$  \mu(T)=\min\{\dim E: E\subset \mathcal H, \ \ 
\bigvee_{n=0}^\infty T^n E=\mathcal H \} .$$

Let $T_1$ and $T_2$ be operators on spaces $\mathcal H_1$ and $\mathcal H_2$, 
respectively, and let
$X:\mathcal H_1\to\mathcal H_2$ be a linear bounded transformation such that 
$X$ {\it intertwines} $T_1$ and $T_2$,
that is, $XT_1=T_2X$. If $X$ is unitary, then $T_1$ and $T_2$ 
are called {\it unitarily equivalent}, in notation:
$T_1\cong T_2$. If $X$ is invertible, then $T_1$ and $T_2$ 
are called {\it similar}, in notation: $T_1\approx T_2$.
If $X$ a {\it quasiaffinity}, that is, $\ker X=\{0\}$ and 
$\operatorname{clos}X\mathcal H_1=\mathcal H_2$, then
$T_1$ is called a {\it quasiaffine transform} of $T_2$, 
in notation: $T_1\prec T_2$. If $T_1\prec T_2$ and 
$T_2\prec T_1$, then $T_1$ and $T_2$ are called {\it quasisimilar}, 
in notation: $T_1\sim T_2$. If $\ker X=\{0\}$, 
we write $T_1 \buildrel i \over \prec T_2$, 
while if $\operatorname{clos}X\mathcal H_1=\mathcal H_2$, we write 
$T_1 \buildrel d \over \prec T_2$.
 
It is well known and easy to see, that if 
$T_1 \buildrel d \over \prec T_2$, then $\mu(T_2)\leq \mu(T_1)$. Also
we recall that if $T_1$ and $T_2$ are unitary operators 
and $T_1\prec T_2$, then $T_1\cong T_2$ 
({\cite[II.3.4]{17}}).

It is well known that if one of $T_1$ or $T_2$ is an a.c. contraction 
and the other is a singular unitary, then the only
linear bounded transformation intertwining $T_1$ and $T_2$ is zero one.
(To see this, one can  apply the Lifting Theorem of Sz.-Nagy and Foias 
({\cite[II.2.3]{17}}) and  {\cite[Corollary 5.1]{5}}).
 Thus, if $T_1$ and $T_2$ are contractions
and $T_1=T_{1a}\oplus T_{1s}$ and $T_2=T_{2a}\oplus T_{2s}$ are
 decompositions of $T_1$ and $T_2$ such that 
$T_{1a}$ and $T_{2a}$ are a.c. contractions and $T_{1s}$ and $T_{2s}$
 are singular unitaries, then
$T_1\cong T_2$, $T_1\approx T_2$, $T_1\sim T_2$, $T_1\prec T_2$ 
if and only if
$T_{1a}\cong T_{2a}$, $T_{1a}\approx T_{2a}$, $T_{1a}\sim T_{2a}$, 
$T_{1a}\prec T_{2a}$, respectively, 
and $T_{1s}\cong T_{2s}$. 
In the sequel, we shall consider a.c. contractions.

The classes of contractions $C_{\alpha\beta}$, where 
$\alpha, \beta=\cdot, 0,1$, were introduced by Sz.-Nagy and 
Foias (see \cite{17} and references therein). Let $T$ be a contraction
 on a space $\mathcal H$. $T$ is of class $C_{1\cdot}$ 
 (a $C_{1\cdot}$-contraction), if $\lim_{n\to\infty}\|T^nx\|>0$ for
each $x\in \mathcal H$, $x\neq 0$, $T$ is of class $C_{0\cdot}$ 
(a $C_{0\cdot}$-contraction), if 
$\lim_{n\to\infty}\|T^nx\|=0$ for each $x\in\mathcal H$, and $T$ is 
of class $C_{\cdot\alpha}$, $\alpha=0,1$, if
$T^\ast$ is of class $C_{\alpha\cdot}$. Clearly, any isometry is 
of class $C_{1\cdot}$, a unitary operator is of 
class $C_{11}$, and a unilateral shift is of class $C_{10}$.

The relationships between contractions and isometries are studied by 
many authors. We mentioned here \cite{7},  \cite{10}, \cite{15}, \cite{kerchy},  \cite{16}, \cite{18}, \cite{20},
\cite{21}, \cite{22}, \cite{23}, \cite{25}, \cite{27} 
(it should be mentioned that in the last conclusion of {\cite[Theorem 2.7]{27} $V$ must be replaced by $T_1\oplus S_n$).

It is well known that a contraction $T$ is of class $C_{1\cdot}$ 
if and only if $T$ is a quasiaffine transform of 
an isometry. ``If" part is evident, and for ``only if" part we refer to
 the {\it isometric asymptote}
 $T_+^{(a)}$ of a contraction $T$, 
 see {\cite[Ch. II and IX]{17}}, {\cite[Ch. XII]{1}}, \cite{13}, and \cite{14}.
A contraction $T$ is of class $C_{11}$ if and only if $T$ is 
quasisimilar to a unitary operator {\cite[II.3.5]{17}}.
If a contraction $T$ is a quasiaffine transform of a unilateral shift, 
then $T$ is of class $C_{10}$, but the 
converse is not true, 
see, for example, \cite{11} and \cite{6}. On the other hand, if $T$ is a 
contraction of class
 $C_{10}$ and $I-T^\ast T$ is of trace class, 
then $T$ is a quasiaffine transform of a unilateral shift \cite{23},
for further results see \cite{20}, \cite{24},  and \cite{8}.
 The main result of this paper is the converse:
if a contraction $T$ is a quasiaffine transform of a unilateral 
shift $S$ and $\mu(S)<\infty$, then $I-T^\ast T$
 is of trace class 
(Theorem \ref{th3.1} below). As a previous result we should mentioned 
the following: it was proved in \cite{21} that if a contraction $T$ 
is a quasiaffine transform of a unilateral 
shift $S$ and $\mu(S)<\infty$, then 
its essentual and
 approximative point spectra 
coincide with the ones of $S$. For infinite 
multiplicity
 $\mu(S)$ it is not true, see \cite{4} or Remark \ref{rem3.4} below.

Our proof is based on the following results:

 if a contraction $T$ is similar to a unitary operator and 
$\mu(T)<\infty$, then $I-T^\ast T$ is of trace 
class \cite{16};

 if $T_1$ and $T_2$ are $C_0$-contractions and 
$T_1\prec T_2$ (then $T_1\sim T_2$), then $I-T_1^\ast T_1$
and $I-T_2^\ast T_2$ are of trace class or not simultaneously
 (see {\cite[Ch. III and VI.4.7]{2}}, or {\cite[X.5.7, X.8.8]{17}} or the original paper \cite{3});

 if a contraction $T$ is a quasiaffine transform of a unilateral
  shift $S$, then there exists a part of 
$T$ which is similar to $S$
   (a particular case of {\cite[Theorem 1]{15}, see also \cite{kerchy}, {\cite[IX.3.5]{17}}).

We shall use the following notation: $\mathbb D$ is the open unit disk, 
$\mathbb T$ is the unit circle, $H^2$ is 
the Hardy space on $\mathbb D$, 
$L^2$ is the Lebesgue space on $\mathbb T$, $H^2_-=L^2\ominus H^2$. 
For a cardinal number $n$, $0\leq n\leq\infty$, $H^2_n$, $L^2_n$, $(H^2_-)_n$ 
are orthogonal sums of $n$ copies of spaces $H^2$, $L^2$,
 $H^2_-$, 
respectively (of course, for $n=0$ the above spaces are zero ones).
 The unilateral shift $S_n$ and 
the  bilateral shift $U_n$ are the operators 
 of multiplication by the independent variable on the  spaces $H^2_n$ and
$L^2_n$, respectively. For a Borel set $\sigma\subset\mathbb T$ by 
$U(\sigma)$ we denote the operator of
 multiplication by the
independent variable on the space $L^2(\sigma)$ of functions from $L^2$ 
that are equal 
to zero a.e. on $\mathbb T\setminus\sigma$. For every 
a.c. isometry $V$ there exist cardinal numbers $k$, $\ell$, 
$0\leq k,\ell \leq\infty$, and Borel sets $\sigma_j$, 
$0\leq j-1 < \ell$, such that 
$\mathbb T\supset\sigma_1\supset\dots\supset\sigma_j\supset
\sigma_{j+1}\supset\dots$, 
 the Lebesgue measure of $\sigma_j$ is not zero, and
$$V\cong S_k\oplus \bigoplus_{j=1}^\ell U(\sigma_j)$$
(Wold decomposition). We have $\mu(V)=k+\ell$.

The paper is organized as follows. In Section 2 we consider contractions 
that are similar to an isometry. In Section 3 we consider contractions 
that are quasiaffine transforms of a unilateral shift. In Section 4 
we consider contractions that are quasisimilar to an isometry.

\section{On contractions similar to an isometry}

In this section we prove a generalization of {\cite[Theorem 4.2]{16}}. 
The first part of our proof is word-by-word the beginning of 
the proof of {\cite[Theorem 2.1]{16}}.

For $\lambda\in\mathbb D$ put $b_\lambda(z)=\frac{z-\lambda}{1-\overline\lambda z}$, $z\in\mathbb D$. 
Let $T$ be a contraction. Then $b_\lambda(T)=(T-\lambda)(I-\overline\lambda T)^{-1}$ is a contraction.  

\begin{theorem}\label{th2.1} Suppose $T$ is an a.c. contraction, 
$\mu(T)<\infty$, and $T$ is similar to an isometry.
Then  \begin{equation}\label{tag4.1}\sup_{\lambda\in\mathbb D}\|I-b_\lambda(T)^\ast b_\lambda(T)\|_{\frak S_1}
<\infty, \end{equation}where $\frak S_1$ is the trace class of operators. \end{theorem}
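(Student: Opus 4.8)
The plan is to reduce the supremum over $\lambda$ to a single \emph{uniform} estimate and then to isolate the genuinely new case, that of the unilateral shift. First I fix an invertible $X$ with $XT=VX$, where $V$ is an isometry. Since $T$ is a.c., $V$ is an a.c. isometry: otherwise the projection $P_s$ onto a singular unitary summand would make $P_sX$ intertwine the a.c. contraction $T$ with a singular unitary, forcing $P_sX=0$ and contradicting surjectivity of $X$. Hence $V\cong S_k\oplus\bigoplus_{j=1}^\ell U(\sigma_j)$ with $k+\ell=\mu(V)=\mu(T)<\infty$. Because $\|\bar\lambda T\|,\|\bar\lambda V\|\le|\lambda|<1$, the resolvents $(I-\bar\lambda T)^{-1}$ and $(I-\bar\lambda V)^{-1}$ are norm limits of polynomials in $T$ and $V$, so $X$ intertwines them and therefore $Xb_\lambda(T)=b_\lambda(V)X$, that is $b_\lambda(T)=X^{-1}b_\lambda(V)X$, with condition number $\le c:=\|X\|\,\|X^{-1}\|$ \emph{independent of} $\lambda$. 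Moreover $b_\lambda$ is a disc automorphism, so $T$ and $b_\lambda(T)$ generate the same norm-closed unital algebra and hence have the same reproducing subspaces; thus $\mu(b_\lambda(T))=\mu(T)$. Finally $b_\lambda(V)$ is again an a.c. isometry of the same multiplicity (the inner factor $b_\lambda$ carries the shift part $S_k$ to a shift of equal multiplicity and each $U(\sigma_j)$ to an a.c. unitary). Consequently it suffices to prove one uniform bound: there is $C=C(c,m)$ so that every contraction $A$ that is similar to an a.c. isometry with condition number $\le c$ and $\mu(A)\le m$ satisfies $\|I-A^\ast A\|_{\frak S_1}\le C$; applying this with $A=b_\lambda(T)$ (for which $c$ and $m$ are fixed) yields \eqref{tag4.1}.

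To prove the uniform bound I would first reformulate the target quantity. For a $C_{1\cdot}$ contraction $A$ the telescoping identity
$$\sum_{n\ge0}A^{\ast n}(I-A^\ast A)A^n=I-A_\ast,$$
where $A_\ast=\text{s-}\lim_n A^{\ast n}A^n$ is the strong limit, exhibits $I-A^\ast A$ as the first term of a series of positive operators, so that $\|I-A^\ast A\|_{\frak S_1}=\operatorname{tr}(I-A^\ast A)\le\operatorname{tr}(I-A_\ast)$; hence it is enough to bound $\operatorname{tr}(I-A_\ast)$. Next I Wold-decompose the isometry $V$ (again denoting by $V$ the isometry to which $A$ is similar) as $V=S_k\oplus U_a$, with $U_a=\bigoplus_j U(\sigma_j)$ an a.c. unitary on a space $\mathcal K$, and use the oblique $A$-invariant splitting $\mathcal H=\mathcal N\dotplus\mathcal M$ with $\mathcal N=X^{-1}(H^2_k\oplus 0)$ and $\mathcal M=X^{-1}(0\oplus\mathcal K)$, so that $A|_{\mathcal N}\approx S_k$ and $A|_{\mathcal M}\approx U_a$. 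The unitary block is controlled by the known fact that a contraction similar to a unitary of finite multiplicity has trace-class defect ({\cite[Theorem 4.2]{16}}), applied in its quantitative form with constants depending only on $c$ and $m$. This leaves the shift block.

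The hard part will be the estimate for a contraction $A$ similar to $S_k$ with $k=\mu(A)<\infty$: I must show $I-A^\ast A\in\frak S_1$ with trace norm bounded solely in terms of $k$ and the condition number. Such an $A$ is of class $C_{10}$, and I would attack it through the Sz.-Nagy--Foias model, using that similarity to $S_k$ forces the characteristic function of $A$ to admit an outer left scalar multiple while the finite multiplicity $k$ limits the size of the defect space, so that $\operatorname{tr}(I-A_\ast)$ (equivalently $\sum_{n\ge0}\|(I-A^\ast A)^{1/2}A^n\|_{\frak S_2}^2$, with $\frak S_2$ the Hilbert--Schmidt class) is finite and controlled. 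I expect two points to demand the most care: (i) the pure-shift trace bound itself, with a constant depending only on $k$ and the condition number, where finiteness of $k$ is essential — for $k=\infty$ the assertion fails, as $S_\infty$ perturbed within its similarity class already shows; and (ii) recombining the two blocks across the non-orthogonal decomposition $\mathcal N\dotplus\mathcal M$, i.e. verifying that the cross terms arising from the oblique angle between $\mathcal N$ and $\mathcal M$ are themselves of trace class with controlled norm. Once (i) and (ii) are in place, the bound $C(c,m)$ follows, and by the reduction of the first paragraph the supremum in \eqref{tag4.1} is finite.
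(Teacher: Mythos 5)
Your opening reduction is correct and is essentially what the paper's proof delivers implicitly: since the intertwiner $X$ is fixed, $b_\lambda(T)=X^{-1}b_\lambda(V)X$ with condition number and multiplicity independent of $\lambda$, so a single uniform bound suffices (the paper's final estimate is $\|B^{-1}\|^2(k+\ell)\|A\|$ with $A,B$ built from the fixed $X$). But from there on your proposal is a plan, not a proof, and the plan has genuine gaps. First, the ``hard part'' you defer --- the trace bound for a contraction similar to $S_k$ --- is not a technical remainder; it is the entire content of the theorem in the case $\ell=0$, and you give no argument for it. Worse, the route you sketch (outer left scalar multiple of the characteristic function plus finite multiplicity implies controlled trace-class defect) is precisely Corollary \ref{cor4.2} of this paper, which is deduced \emph{from} Theorem \ref{th2.1} via Theorems \ref{th3.1} and \ref{th4.1}; invoking it here would be circular. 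Second, your recombination across the oblique splitting $\mathcal N\dotplus\mathcal M$ is also left open, and it is not benign: similarity does not preserve the trace-class property of the defect (Remark \ref{rem2.2} is built on exactly this failure), so straightening the non-orthogonal direct sum and treating blocks separately requires a real argument, not merely control of ``cross terms.'' The ``quantitative form'' of Sahnovich's theorem for the unitary block is likewise asserted rather than verified.

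The point you are missing is that no splitting of the isometry is needed at all. The paper replaces $T$ by $T_1=W^{-1}TW$ using the polar decomposition $X=W(X^\ast X)^{1/2}$, so that the similarity $BV=T_1B$ is implemented by the \emph{positive} invertible $B=(X^\ast X)^{1/2}$; then
$I-b_\lambda(T_1)^\ast b_\lambda(T_1)=B^{-1}\bigl(A-b_\lambda(V)^\ast Ab_\lambda(V)\bigr)B^{-1}$
with $A=X^\ast X$, and $A-b_\lambda(V)^\ast Ab_\lambda(V)\geq 0$. After padding each $U(\sigma_j)$ by $U(\mathbb T\setminus\sigma_j)$ so that $V$ becomes $S_k\oplus U_\ell$, one uses $b_\lambda(V)\cong S_k\oplus U_\ell$ to pick an orthonormal basis consisting of $k$ one-sided and $\ell$ two-sided orbits of $b_\lambda(V)$; the trace of $A-b_\lambda(V)^\ast Ab_\lambda(V)$, computed in this basis, telescopes along each orbit (the diagonal entries $(Ah_{\lambda in},h_{\lambda in})$ are bounded and decreasing), giving $\|A-b_\lambda(V)^\ast Ab_\lambda(V)\|_{\mathfrak S_1}\leq(k+\ell)\|A\|$ uniformly in $\lambda$. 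This disposes of the shift and unitary parts simultaneously, with no model theory, no scalar multiples, and no oblique decomposition. Your telescoping identity $\sum_{n\geq 0}A^{\ast n}(I-A^\ast A)A^n=I-A_\ast$ is the right instinct applied to the wrong object: the workable telescoping is of the fixed positive operator $A=X^\ast X$ along orbits of $b_\lambda(V)$, not of the defect of $b_\lambda(T)$ itself.
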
 

\begin{proof} Let $V$ be an isometry such that $T\approx V$. 
We have $\mu(T)=\mu(V)<\infty$. Therefore,  
there exist nonnegative integers $k$, $\ell$, $0\leq k, \ell <\infty$, and 
Borel sets $\sigma_j$, $j=1, \dots, \ell$, 
such that $\mathbb T\supset \sigma_1\supset\dots\supset\sigma_\ell$ and
$$V\cong S_k\oplus\bigoplus_{j=1}^\ell U(\sigma_j).$$
We put
$$U^\prime=\bigoplus_{j=1}^\ell U(\mathbb T\setminus\sigma_j), 
\ \ \ T^\prime=T\oplus U^\prime, \ \ 
\text{ and } \ \ V^\prime=V\oplus U^\prime.$$
Then $T^\prime\approx V^\prime$ and $I-b_\lambda(T^\prime)^\ast b_\lambda(T^\prime)=
(I-b_\lambda(T)^\ast b_\lambda(T))\oplus\mathbb O$ for every $\lambda\in\mathbb D$.
 Further, $V^\prime\cong S_k\oplus U_\ell$, 
therefore, $T^\prime\approx S_k\oplus U_\ell$.
Thus, it is sufficient to prove Theorem \ref{th2.1} for a contraction $T$ 
which is similar to $S_k\oplus U_\ell$, 
where $0\leq k,\ell <\infty$.

Now we suppose that $T$ is a contraction on a space $\mathcal H$, 
$0\leq k,\ell <\infty$, 
$\mathcal K=H^2_k\oplus L^2_\ell$,  
$V=S_k\oplus U_\ell$, and $X:\mathcal K\to\mathcal H$ is a linear 
bounded invertible transformation such that $XV=TX$. Let 
$X=W(X^\ast X)^{1/2}$ be the polar decomposition 
of $X$; since $X$ is invertible, $W$ is unitary. We put $T_1=W^{-1}TW$, then
$$T\cong T_1\ \ \ \text{ and } \ \ \ (X^\ast X)^{1/2}V=T_1(X^\ast X)^{1/2}.$$
We put $A=X^\ast X$ and $B=(X^\ast X)^{1/2}$. Since $X$ is invertible, 
we have that $B$ is an invertible 
operator on $\mathcal K$. Further,  $B b_\lambda(V)= b_\lambda(T_1)B$ for every $\lambda\in\mathbb D$,  and
\begin{align*}I- b_\lambda(T_1)^\ast b_\lambda(T_1) & 
=B^{-1}BBB^{-1} - B^{-1} b_\lambda(V)^\ast BB b_\lambda(V)B^{-1} \\& =
B^{-1}(A -  b_\lambda(V)^\ast A b_\lambda(V))B^{-1}.\end{align*}
Clearly, \begin{equation}\label{tag4.2}\|I-b_\lambda(T_1)^\ast b_\lambda(T_1)\|_{\frak S_1}
\leq \|B^{-1}\|^2\|A-b_\lambda(V)^\ast Ab_\lambda(V)\|_{\frak S_1}.
 \end{equation}

Let $\lambda\in\mathbb D$ be fixed. 
First, we show that $A-b_\lambda(V)^\ast Ab_\lambda(V)$ is a positive operator. 
Sinse $T_1$ is a contraction, $b_\lambda(T_1)$ is a contraction, too. Therefore, 
$I-b_\lambda(T_1)^\ast b_\lambda(T_1)$ is a positive operator. Let $h\in\mathcal K$, then
\begin{align*}\big((A-b_\lambda(V)^\ast Ab_\lambda(V))h, h\big) & 
=\big(B(I-b_\lambda(T_1)^\ast b_\lambda(T_1))Bh, h\big) \\ &=
\big((I-b_\lambda(T_1)^\ast b_\lambda(T_1))Bh, Bh\big)\geq 0.\end{align*}
Since  $A-b_\lambda(V)^\ast Ab_\lambda(V)$ is a positive operator, 
\begin{equation}\label{tag4.3}\|A-b_\lambda(V)^\ast Ab_\lambda(V)\|_{\frak S_1}=
\sum_n\big((A-b_\lambda(V)^\ast Ab_\lambda(V))x_n, x_n\big) \end{equation}
for every orthonormal basis $\{x_n\}_n$ of $\mathcal K=H^2_k\oplus L^2_\ell$, 
the space on which $V$ and $A$ act (see, for example,  {\cite[III.8.1]{9}}).

 Since $b_\lambda(V)\cong V=S_k\oplus U_\ell$, 
there exists an orthonormal basis 
$$\chi_\lambda=\big\lbrace \ \{h_{\lambda in}\}_{n=0}^\infty,
 \ \  i=1,\dots,k, 
\ \ \ \{f_{\lambda jn}\}_{n=-\infty}^\infty, \ \ 
j=1,\dots,\ell\ \big\rbrace$$
of $\mathcal K$ such that 
$$b_\lambda(V)h_{\lambda in}=h_{\lambda, i, n+1}, \ \ n=0,1,\dots, 
\ i=1,\dots, k,$$
$$b_\lambda(V)f_{\lambda jn}=f_{\lambda, j, n+1}, \ \ n=\dots, -1,0,1,\dots,
\ j=1,\dots,\ell.$$
Put 
$$ a_{\lambda in}=(Ah_{\lambda in},h_{\lambda in}), \ \ n=0,1,\dots, 
\ i=1,\dots,k,$$
 and 
 $$ b_{\lambda jn}=(Af_{\lambda jn},f_{\lambda jn}), \ \ n=\dots,-1,0,1,\dots,
  \ j=1,\dots,\ell.$$
We have $0\leq a_{\lambda in}\leq \|A\|$, $0\leq b_{\lambda jn}\leq \|A\|$, 
$$a_{\lambda in}-a_{\lambda, i, n+1}=
\big((A-b_\lambda(V)^\ast Ab_\lambda(V))h_{\lambda in},h_{\lambda in}\big)
\geq 0$$
 and 
 $$ b_{\lambda jn}-b_{\lambda, j, n+1}=
 \big((A-b_\lambda(V)^\ast Ab_\lambda(V))f_{\lambda jn},f_{\lambda jn}\big)
 \geq 0.$$
Therefore, the sequences $\{a_{\lambda in}\}_{n=0}^\infty$ and 
$\{b_{\lambda jn}\}_{n=-\infty}^\infty$ are bounded and decreasing. Set
$$a_{\lambda i}=\lim_{n\to\infty}a_{\lambda in}, \ \ i=1,\dots,k,$$
$$ b_{\lambda j+}=\lim_{n\to\infty}b_{\lambda jn} \ \ \text{ and }
\ \ b_{\lambda j-}=\lim_{n\to -\infty}b_{\lambda jn}, \ \  j=1,\dots,\ell.$$
By \eqref{tag4.3} applyed  to the orthonormal 
basis $\chi_\lambda$, 
\begin{equation}\label{tag4.4}\begin{aligned}\|A-b_\lambda(V)^\ast & Ab_\lambda(V)\|_{\frak S_1} \\& =
\sum_{i=1}^k\sum_{n=0}^\infty(a_{\lambda in}-a_{\lambda, i, n+1})+
\sum_{j=1}^\ell\sum_{n=-\infty}^\infty(b_{\lambda jn}-b_{\lambda, j, n+1}) \\ 
&=
\sum_{i=1}^k(a_{\lambda i0}-a_{\lambda i})+
\sum_{j=1}^\ell(b_{\lambda j-}-b_{\lambda j+})\leq(k+\ell)\|A\|. \end{aligned} \end{equation}

The conclusion of  Theorem \ref{th2.1} for $T_1$ follows from \eqref{tag4.2} and \eqref{tag4.4}. 
Since $T\cong T_1$, $I-b_\lambda(T)^\ast b_\lambda(T)\cong I-b_\lambda(T_1)^\ast b_\lambda(T_1)$ for every 
$\lambda\in\mathbb D$. \end{proof} 

\begin{remark}\label{rem2.2new} By \cite{treil}, if a contraction $T$ on a Hilbert space $\mathcal H$ is such that 
$\|b_\lambda(T)x\|\geq\delta\|x\|$ for every $\lambda\in\mathbb D$, $x\in\mathcal H$, with some $\delta>0$, and 
$T$ satisfies to \eqref{tag4.1}, then $T$ is similar to an isometry. 
Theorem \ref{th2.1} shows that in the case of  finite multiplicity the converse is true.
The results from \cite{treil} are formulated in terms of the characteristic function of a contraction, see \cite{17}. 
A detailed explanation  of the relationship under consideration can be found in \cite{gamal}.\end{remark}

\begin{remark}\label{rem2.2} If a contraction $T$ is similar to an isometry, 
but $\mu(T)=\infty$, the conclusion of 
Theorem \ref{th2.1} is not true. To see this, one can take a contraction 
$T_1$, which satisfies Theorem \ref{th2.1} and 
such that   $I-T_1^\ast T_1\ne \mathbb O$, and put $T=\oplus_{n=1}^\infty T_1$. \end{remark}

\smallskip

We conclude this section by the following lemma, which will be needed in 
the sequel.
 For a proof, we refer to {\cite[VI.3.20]{2}}. Also this lemma 
can be deduced from a necessary and sufficient 
conditions 
on a positive operator be of trace class (see {\cite[III.8.1]{9}}).

\begin{lemma}\label{lem2.3} Suppose $T$ is a contraction on a space $\mathcal H$, 
and $\mathcal E\subset\mathcal H$
 is an invariant subspace of $T$, that is, 
a linear closed set such that  $T\mathcal E\subset\mathcal E$.
Let ${\mathcal E}^\perp=\mathcal H\ominus\mathcal E$, and let 
$P_{{\mathcal E}^\perp}$ be the orthogonal projection from $\mathcal H$ 
onto ${\mathcal E}^\perp$. Then $T$ has an upper triangular form
$$\begin{pmatrix} T|_{\mathcal E}& \ast \\ 0 & P_{{\mathcal E}^\perp}T|_{{\mathcal E}^\perp}
 \end{pmatrix} $$
with respect to the decomposition $\mathcal H = \mathcal E\oplus{\mathcal E}^\perp$. 

If
 $I-(T|_{\mathcal E})^\ast T|_{\mathcal E}$ and 
$I-(P_{{\mathcal E}^\perp}T|_{{\mathcal E}^\perp})^\ast 
P_{{\mathcal E}^\perp}T|_{{\mathcal E}^\perp}$
are of trace class, then $I-T^\ast T$ is of trace class. \end{lemma}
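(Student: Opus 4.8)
The plan is to read off the triangular form by a block computation and then deduce the trace-class assertion from the positivity of $I-T^\ast T$: since $T$ is a contraction, $I-T^\ast T\geq\mathbb O$, so by the criterion for a positive operator to be of trace class (the one recalled in \eqref{tag4.3}, see {\cite[III.8.1]{9}}) it suffices to show that $\sum_n((I-T^\ast T)e_n,e_n)<\infty$ for a single conveniently chosen orthonormal basis $\{e_n\}$ of $\mathcal H$.

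First I would write $T$ as a $2\times2$ operator matrix with respect to $\mathcal H=\mathcal E\oplus\mathcal E^\perp$. The invariance $T\mathcal E\subset\mathcal E$ forces $P_{\mathcal E^\perp}T|_{\mathcal E}=\mathbb O$, which gives the stated upper triangular form with diagonal entries $A:=T|_{\mathcal E}$ and $D:=P_{\mathcal E^\perp}T|_{\mathcal E^\perp}$ and off-diagonal entry $B:=P_{\mathcal E}T|_{\mathcal E^\perp}$. Being a restriction, respectively a compression, of a contraction, both $A$ and $D$ are contractions, so $I-A^\ast A\geq\mathbb O$ and $I-D^\ast D\geq\mathbb O$; by hypothesis both belong to $\frak S_1$.

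Next I would choose an orthonormal basis of $\mathcal H$ adapted to the decomposition, namely an orthonormal basis $\{u_m\}_m$ of $\mathcal E$ together with an orthonormal basis $\{v_n\}_n$ of $\mathcal E^\perp$, and evaluate the diagonal entries of $I-T^\ast T$ on it. For $u_m\in\mathcal E$ one has $Tu_m=Au_m$, so $((I-T^\ast T)u_m,u_m)=1-\|Au_m\|^2=((I-A^\ast A)u_m,u_m)$, and summation over $m$ produces $\|I-A^\ast A\|_{\frak S_1}$. For $v_n\in\mathcal E^\perp$ the triangular form gives $Tv_n=Bv_n+Dv_n$ with $Bv_n\in\mathcal E$ and $Dv_n\in\mathcal E^\perp$ orthogonal, whence $\|Tv_n\|^2=\|Bv_n\|^2+\|Dv_n\|^2\geq\|Dv_n\|^2$ and therefore $((I-T^\ast T)v_n,v_n)=1-\|Tv_n\|^2\leq1-\|Dv_n\|^2=((I-D^\ast D)v_n,v_n)$. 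As $I-T^\ast T\geq\mathbb O$, these diagonal entries are nonnegative, so the term-by-term estimate may be summed to yield $\sum_n((I-T^\ast T)v_n,v_n)\leq\|I-D^\ast D\|_{\frak S_1}$.

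Adding the two contributions I obtain $\|I-T^\ast T\|_{\frak S_1}=\sum_m((I-T^\ast T)u_m,u_m)+\sum_n((I-T^\ast T)v_n,v_n)\leq\|I-A^\ast A\|_{\frak S_1}+\|I-D^\ast D\|_{\frak S_1}<\infty$, so $I-T^\ast T\in\frak S_1$. The only subtle point is the sign of the off-diagonal contribution: the block $B$ enlarges $\|Tv_n\|^2$ and hence diminishes the corresponding diagonal entries of $I-T^\ast T$, which is precisely why the clean domination by $I-D^\ast D$ is available and no quantitative control on $B$ is needed. I expect this sign bookkeeping, not any analytic difficulty, to be the main thing to get right.
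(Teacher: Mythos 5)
Your argument is correct, and it is precisely the second route the paper itself indicates: since $T$ is a contraction, $I-T^\ast T\geq \mathbb O$, so trace-class membership can be tested by summing the diagonal entries in a single orthonormal basis ({\cite[III.8.1]{9}}), and your basis adapted to $\mathcal H=\mathcal E\oplus{\mathcal E}^\perp$, together with the term-by-term domination $((I-T^\ast T)v_n,v_n)\leq((I-D^\ast D)v_n,v_n)$, carries this out. The paper gives no proof of its own, citing {\cite[VI.3.20]{2}} and merely remarking that the lemma can be deduced from that criterion, so your computation supplies exactly the omitted details.
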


\section{On contractions that are quasiaffine transforms 
of a unilateral shift}

In this section we prove the main result of our paper. 

\begin{theorem}\label{th3.1} Suppose $T$ is a contraction, $1\leq n <\infty$, 
and $T\prec S_n$. Then $I-T^\ast T$
 is of trace class.\end{theorem}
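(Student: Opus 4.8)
The plan is to triangulate $T$ along an invariant subspace and control the two diagonal blocks separately. First I would invoke the quoted result \cite[Theorem 1]{15}: since $T\prec S_n$, there is a closed $T$-invariant subspace $\mathcal E\subset\mathcal H$ such that the restriction $T|_{\mathcal E}$ is similar to $S_n$. Because $T|_{\mathcal E}\approx S_n$ and $S_n$ is $C_{10}$ (hence completely nonunitary, hence a.c.), $T|_{\mathcal E}$ is an a.c. contraction with $\mu(T|_{\mathcal E})=\mu(S_n)=n<\infty$, so Theorem \ref{th2.1} applies to it. Taking $\lambda=0$ in \eqref{tag4.1} and using $b_0(z)=z$ gives $I-(T|_{\mathcal E})^\ast T|_{\mathcal E}\in\frak S_1$. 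In view of Lemma \ref{lem2.3} applied to $\mathcal E$, it then suffices to prove that $I-T_2^\ast T_2\in\frak S_1$, where $T_2=P_{\mathcal E^\perp}T|_{\mathcal E^\perp}$ is the compression of $T$ to $\mathcal E^\perp$.

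The heart of the argument is to show that $T_2$ is a $C_0$-contraction whose defect operator is trace class, and for this I would compare $T_2$ with a functional model. Let $X\colon\mathcal H\to H^2_n$ be the quasiaffinity with $XT=S_nX$, and let $Y\colon H^2_n\to\mathcal E$ implement $T|_{\mathcal E}\approx S_n$. Then $\mathcal M:=\operatorname{clos}X\mathcal E$ is $S_n$-invariant, and analysing $XY\in\{S_n\}'$ (a multiplication operator $M_\Phi$ that is injective, whence $\det\Phi\not\equiv0$) together with the Beurling--Lax theorem identifies $\mathcal M=\Theta H^2_n$ for a square inner function $\Theta$ with $\det\Theta\not\equiv0$. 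Consequently the model operator $S(\Theta)=P_{H^2_n\ominus\mathcal M}S_n|_{H^2_n\ominus\mathcal M}$ is a $C_0$-contraction, and $I-S(\Theta)^\ast S(\Theta)$ has finite rank (at most $n$), hence lies in $\frak S_1$. Since $X$ maps $\mathcal E$ densely into $\mathcal M$, it induces natural intertwining maps between $T_2$ and $S(\Theta)$ (by restriction and compression to $\mathcal E^\perp$ and to $H^2_n\ominus\mathcal M$), and I would use these to conclude that $T_2$ is of class $C_0$ and quasisimilar to $S(\Theta)$.

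With that in hand the proof finishes quickly: $T_2\sim S(\Theta)$, both are $C_0$-contractions, and $I-S(\Theta)^\ast S(\Theta)\in\frak S_1$, so the quoted invariance of trace-class defects under quasisimilarity of $C_0$-contractions (\cite[Ch. III and VI.4.7]{2}, \cite{3}) yields $I-T_2^\ast T_2\in\frak S_1$. Combined with $I-(T|_{\mathcal E})^\ast T|_{\mathcal E}\in\frak S_1$, Lemma \ref{lem2.3} gives $I-T^\ast T\in\frak S_1$, as required.

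I expect the main obstacle to be the middle paragraph, specifically proving that $T_2$ is genuinely of class $C_0$ and quasisimilar to $S(\Theta)$. The map induced by $X$ on $\mathcal E^\perp$ has dense range onto $H^2_n\ominus\mathcal M$ essentially for free (the adjoint map $X^\ast|_{H^2_n\ominus\mathcal M}$ is injective because $X^\ast$ is), but its injectivity is not automatic: its kernel is $\bigl(X^{-1}\mathcal M\bigr)\ominus\mathcal E$, which can be nonzero unless $\mathcal E$ is chosen carefully, for instance maximal among $T$-invariant subspaces on which $T$ is similar to $S_n$. Controlling this kernel --- equivalently, ruling out a residual summand of $T_2$ not captured by $S(\Theta)$ --- together with confirming $\det\Theta\not\equiv0$ (so that the model is $C_0$ with finite defect indices) is the delicate point on which the whole argument turns.
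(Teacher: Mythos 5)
Your outer scaffolding coincides with the paper's proof: K\'erchy's theorem ({\cite[Theorem 1]{15}}) supplies the invariant subspace $\mathcal E$ with $T|_{\mathcal E}\approx S_n$, Theorem \ref{th2.1} (indeed at $\lambda=0$) handles $I-(T|_{\mathcal E})^\ast T|_{\mathcal E}$, and Lemma \ref{lem2.3} reduces everything to the compression $T_2=P_{{\mathcal E}^\perp}T|_{{\mathcal E}^\perp}$. But the middle of your argument has a genuine gap, exactly where you flag it, and it is not a repairable technicality within your scheme. Your map $Z=P_{H^2_n\ominus\mathcal M}X|_{{\mathcal E}^\perp}$ gives only $T_2\buildrel d \over \prec S(\Theta)$: with $\varphi$ the minimal function of $S(\Theta)$, the relation $Z\varphi(T_2)=\varphi(S(\Theta))Z=0$ yields merely $\operatorname{ran}\varphi(T_2)\subset\ker Z=(X^{-1}\mathcal M)\cap{\mathcal E}^\perp$, so neither the $C_0$ property nor any defect estimate passes back to $T_2$ unless $\ker Z=\{0\}$. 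Your appeal to quasisimilarity is circular at this point: the implication $T_2\prec S(\Theta)\Rightarrow T_2\sim S(\Theta)$ is available only for $C_0$-contractions, and $C_0$-ness of $T_2$ is what injectivity was supposed to deliver. The suggested repair --- choose $\mathcal E$ maximal --- is unsupported: nothing guarantees maximal elements exist (along an increasing chain of invariant subspaces on which $T$ is similar to $S_n$, the similarity constants can blow up), and the natural enlargement $\mathcal N=X^{-1}\mathcal M$, which is $T$-invariant with $\operatorname{clos}X\mathcal N=\mathcal M$, satisfies only $T|_{\mathcal N}\prec S_n$, not $T|_{\mathcal N}\approx S_n$, so the construction does not close up.

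The paper bridges precisely this gap with a different tool, Takahashi's shift index $\kappa(\cdot)=\sup\{m:\ S_m\buildrel i \over\prec\cdot\,\}$: by {\cite[Proposition 4]{22}}, $\kappa(T)=\kappa(T|_{\mathcal E})=n$, and by {\cite[Corollary 1]{22}}, $\kappa(T)\geq\kappa(T|_{\mathcal E})+\kappa(T_2)$, whence $\kappa(T_2)=0$; since $T_2$ is of class $C_{\cdot 0}$, this forces $T_2$ to be of class $C_0$ ({\cite[Introduction]{22}}). The finite-rank bound you hoped to extract from $\Theta$ being $n\times n$ is instead obtained from multiplicity: $\mu(T_2)\leq\mu(T)\leq n+1$ by {\cite[Remark 2.6]{8}}, so the Jordan model of $T_2$ is a finite sum with finite-rank defect, and {\cite[VI.4.7]{2}} or \cite{3} transfers trace-classness to $I-T_2^\ast T_2$. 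Your Beurling--Lax analysis of $\mathcal M=\operatorname{clos}XY H^2_n$ (square $\Theta$, $\det\Theta\not\equiv 0$, $S(\Theta)$ of class $C_0$ with defect rank at most $n$) is correct as far as it goes, but absent a proof that $X^{-1}\mathcal M=\mathcal E$ --- and none is in sight --- the shift-index argument is the missing idea.
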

\begin{proof} By {\cite[Theorem 1(a)]{13}} or {\cite[IX.1.3]{17}} we have
 $T_+^{(a)}\buildrel d \over \prec S_n$, therefore,  
$T_+^{(a)}\cong V\oplus S_k$, where $k\geq n$ and $V$ is an a.c. isometry. 
By {\cite[Theorem 1]{15}}, see also \cite{kerchy}, {\cite[IX.3.5]{17}},  there exists
 an invariant subspace $\mathcal E$ of $T$ 
  such that $T|_{\mathcal E} \approx S_n$. We put 
  $T_0=P_{{\mathcal E}^\perp}T|_{{\mathcal E}^\perp}$, where
$P_{{\mathcal E}^\perp}$ is the orthogonal projector on ${\mathcal E}^\perp$, 
and we shall show that $T_0$ is of class $C_0$.

For an operator $T^\prime$, by $\kappa(T^\prime)$ we denote the shift index 
of $T^\prime$:
$$\kappa(T^\prime)=\sup\{n: \ S_n\buildrel i \over\prec T^\prime\},$$ 
which was introduced in \cite{19} and studied in \cite{22} and \cite{gamshift}. By {\cite[Proposition 4]{22}},
$\kappa(T)=\kappa(T|_{\mathcal E})=n$, and, by {\cite[Corollary 1]{22}}, 
$\kappa(T)\geq\kappa(T|_{\mathcal E})+\kappa(T_0)$, therefore,  $\kappa(T_0)=0$. 
If we suppose that $T_0$ is not of class $C_0$, then, 
by {\cite[Introduction]{22}}, we must conclude that $\kappa(T_0)\geq 1$,  
a contradiction. Thus, $T_0$ is of class $C_0$.

By {\cite[Remark 2.6]{8}}, we have $\mu(T)\leq n+1$, and, since $T_0$ is 
a compression of $T$ on its coinvariant 
subspace, we have 
$\mu(T_0)\leq\mu(T)$. It is known (see {\cite[III.5.1]{2}} or {\cite[X.5.7]{17}}),
 that every  $C_0$-contraction is 
quasisimilar to a Jordan operator 
 of class $C_0$, that is, an operator of the form 
$\oplus_{j=0}^\infty S(\theta_j)$, where $\theta_j$ are inner functions 
from $H^\infty$, $\theta_{j+1}$ 
divides $\theta_j$ for all $j\geq 0$, 
and $S(\theta_j)$ is the compression of $S_1$ on its coinvariant 
subspace $H^2\ominus\theta_j H^2$; it is possible that $\theta_j\equiv 1$ 
for $j$ greater than some $j_0$.
By {\cite[III.4.12]{2}} or {\cite[X.5.6]{17}}, 
$$\mu\big(\oplus_{j=0}^\infty S(\theta_j)\big)=
\min\{ j:\  \theta_j\equiv 1 \}.$$
Let $J=\oplus_{j=0}^\infty S(\theta_j)$ be a Jordan operator such 
that $T_0\sim J$. Then 
$\mu(J)=\mu(T_0)<\infty$, therefore,  the sum 
$\oplus_{j=0}^\infty S(\theta_j)$ is actually finite,
and we have that $I-J^\ast J$ is a finite rank operator. 
By {\cite[VI.4.7]{2}}, or {\cite[X.8.8]{17}}, or \cite{3}, we conclude that
$I-T_0^\ast T_0$ is of trace class.

By Theorem \ref{th2.1}, $I-(T|_{\mathcal E})^\ast T|_{\mathcal E}$ is of trace class, 
and, by Lemma \ref{lem2.3}, we conclude that $I-T^\ast T$ is of trace class.  \end{proof}
\smallskip

\begin{corollary}\label{cor3.2} Let $T$ be a contraction, and let 
$1\leq n <\infty$. The following are equivalent:

$(1)$ $T\prec S_n$;

$(2)$ $T$ is of class $C_{10}$, $\dim\ker T^\ast =n$, and $I-T^\ast T$ 
is of trace class;

$(3)$ $T$ is of class $C_{10}$, $\dim\ker T^\ast=n$, and the 
characteristic function of $T$ has a left 
scalar multiple.\end{corollary}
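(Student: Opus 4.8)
The plan is to establish the cyclic chain $(1)\Rightarrow(2)\Rightarrow(3)\Rightarrow(1)$, regarding the relation $T\prec S_n$ as the common core to which both the trace-class condition in $(2)$ and the characteristic-function condition in $(3)$ are reformulations. Three facts carry the argument: the remark in the Introduction that a quasiaffine transform of a unilateral shift is of class $C_{10}$; Theorem \ref{th3.1}, which yields $I-T^\ast T\in\frak S_1$; and the result of \cite{23}, that a $C_{10}$ contraction with $I-T^\ast T$ of trace class is a quasiaffine transform of \emph{some} unilateral shift. The one nontrivial bookkeeping point, which must be threaded through every implication, is that the multiplicity of the shift occurring in $(1)$, in \cite{23}, and in the characteristic-function criterion is in each case $\dim\ker T^\ast$; the heart of the corollary is therefore the identity $\dim\ker T^\ast=n$.

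For $(1)\Rightarrow(2)$ the first two conclusions are immediate: $T\prec S_n$ gives $T\in C_{10}$ by the Introduction, and $I-T^\ast T\in\frak S_1$ by Theorem \ref{th3.1}. To compute $\dim\ker T^\ast$, write $XT=S_nX$ with $X$ a quasiaffinity; the adjoint identity $T^\ast X^\ast=X^\ast S_n^\ast$ shows that $X^\ast$ is a quasiaffinity carrying $\ker S_n^\ast$ injectively into $\ker T^\ast$, so that $\dim\ker T^\ast\ge\dim\ker S_n^\ast=n$. Only this lower bound is cheap.

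The reverse inequality $\dim\ker T^\ast\le n$ is where the real content lies, and I expect it to be the main obstacle. All the soft quasiaffinity estimates are one-directional: from $T\prec S_n$ one extracts only lower bounds for $\dim\ker T^\ast$ and for $\mu(T)$, and the upper-triangular estimate of Lemma \ref{lem2.3} applied to the decomposition of $T$ from the proof of Theorem \ref{th3.1} gives merely $\dim\ker T^\ast\le n+\dim\ker T_0^\ast$, where the $C_0$ summand $T_0$ may genuinely contribute to its own cokernel. To obtain the exact value I would pass to the Sz.-Nagy--Foias functional model: since $T\in C_{\cdot0}$ the characteristic function $\Theta_T$ is inner, the model space is $H^2(\mathfrak D_{T^\ast})\ominus\Theta_TH^2(\mathfrak D_T)$, $T^\ast$ is the restriction of the backward shift to it, and a direct computation identifies $\ker T^\ast$ with the constants annihilated by $\Theta_T(0)^\ast$, that is, $\dim\ker T^\ast=\dim\mathfrak D_{T^\ast}-\operatorname{rank}\Theta_T(0)$. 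The task is then to show that the intertwining into $S_n$ forces $\operatorname{rank}\Theta_T(0)=\dim\mathfrak D_{T^\ast}-n$; this is the quantitative form of the classical correspondence between quasiaffine transforms of shifts and left scalar multiples of $\Theta_T$.

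Once $\dim\ker T^\ast=n$ is secured, the remaining implications are translations between the three languages. For $(2)\Rightarrow(1)$ I invoke \cite{23} to get $T\prec S_m$ for some $m$, and the kernel identity forces $m=\dim\ker T^\ast=n$, hence $T\prec S_n$. For $(3)$ I would use the same functional-model correspondence in both directions: an intertwining $T\prec S_n$ produces a bounded analytic $\Omega$ with $\Omega\Theta_T=\delta I_{\mathfrak D_T}$ for a nonzero scalar $\delta\in H^\infty$, i.e.\ a left scalar multiple, and conversely a left scalar multiple of the inner function $\Theta_T$ (for a $C_{10}$ contraction) reconstructs the quasiaffine transform onto a shift, whose multiplicity is again $\dim\ker T^\ast$. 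Combined with $(1)\Leftrightarrow(2)$ this closes the cycle, the delicate multiplicity bookkeeping of the third paragraph being exactly what guarantees that the single integer $n$ serves throughout.
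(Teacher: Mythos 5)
There is a genuine gap. Your architecture matches the paper's in outline --- the paper likewise rests on Theorem \ref{th3.1}, on the Introduction's remark that a quasiaffine transform of a unilateral shift is of class $C_{10}$, and on the implication $(2)\Rightarrow(1)$ taken from \cite{23} --- but where the paper closes the remaining steps by citation, you leave them unproved. Concretely, the upper bound $\dim\ker T^\ast\le n$ (which you rightly call the main obstacle) and \emph{both} directions of the correspondence between $T\prec S_n$ and left scalar multiples of the characteristic function in $(3)\Leftrightarrow(1)$ are announced as ``the task'' and ``the classical correspondence'' but never carried out. These are precisely the content of Takahashi's paper \cite{21}, which the paper's three-line proof invokes: $(1)\Leftrightarrow(3)$ is contained in \cite{21}, $(2)\Rightarrow(1)$ in \cite{23}, and $(1)\Rightarrow(2)$ follows from \cite{21} together with Theorem \ref{th3.1}. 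In particular, the kernel count needs no functional-model computation: by \cite{21}, if $T\prec S_n$ with $n<\infty$, then the essential and approximate point spectra of $T$ coincide with those of $S_n$, so $T-\lambda$ is semi-Fredholm of index $-n$ for $\lambda\in\mathbb D$; since a $C_{1\cdot}$-contraction is injective, $\dim\ker T^\ast=n$ at once.

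Two further defects in the sketch itself. First, your model-theoretic formula $\dim\ker T^\ast=\dim\mathfrak D_{T^\ast}-\operatorname{rank}\Theta_T(0)$ is ill-posed as written: trace-class $I-T^\ast T$ does not force $\dim\mathfrak D_{T^\ast}<\infty$, so the subtraction may be $\infty-\infty$; the correct invariant statement is $\ker T^\ast\cong\ker\Theta_T(0)^\ast$, and what must be shown is $\dim\ker\Theta_T(0)^\ast=n$ --- which is again the unproven core. Second, your step $(2)\Rightarrow(1)$ is conditional on that same core: \cite{23} gives $T\prec S_m$ for some shift, and your cheap adjoint argument yields only $m\le\dim\ker T^\ast$, not equality, so pinning $m=n$ requires exactly the unestablished claim that $T\prec S_m$ forces $\dim\ker T^\ast=m$. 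In sum, you correctly identified the skeleton, the role of Theorem \ref{th3.1}, and the easy lower bound $\dim\ker T^\ast\ge n$, but everything quantitative that lets the single integer $n$ travel through the three conditions is missing rather than merely proved by a different route.
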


(For the characteristic function of a contraction  we refer to \cite{17}.)

\begin{proof} The equivalence $(1)\Longleftrightarrow (3)$ 
is contained in \cite{21}, and the implication 
$(2)\Longrightarrow (1)$ is contained in \cite{23}. The implication 
$(1)\Longrightarrow (2)$ follows from
\cite{21} and Theorem \ref{th3.1}.  \end{proof}

\begin{corollary}\label{cor3.3} If $T$ is from Theorem \ref{th3.1}, then $I-TT^\ast$ 
is  of trace class.\end{corollary}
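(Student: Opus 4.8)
The plan is to reduce the statement to the trace-class property of $I-T^\ast T$, which is already available from Theorem \ref{th3.1}, by comparing the two defect operators through the polar decomposition of $T$. First I would record two finiteness facts supplied by the surrounding results. Since $T\prec S_n$, there is a quasiaffinity $X$ with $XT=S_nX$; if $Tx=0$ then $S_nXx=XTx=0$, and because $S_n$ is injective and $X$ is injective we get $Xx=0$ and then $x=0$. Thus $\ker T=\{0\}$. Moreover, by Corollary \ref{cor3.2} the hypothesis $T\prec S_n$ forces $T$ to be of class $C_{10}$ with $\dim\ker T^\ast=n<\infty$.

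Next I would write the polar decomposition $T=U|T|$ with $|T|=(T^\ast T)^{1/2}$. Because $\ker T=\{0\}$, the partial isometry $U$ has initial space $(\ker T)^\perp=\mathcal H$, so $U$ is an isometry, $U^\ast U=I$, while $UU^\ast$ is the orthogonal projection onto $\overline{\operatorname{ran}T}=(\ker T^\ast)^\perp$; hence $I-UU^\ast$ is the orthogonal projection onto $\ker T^\ast$, an operator of rank $n$. From $T=U|T|$ I obtain $TT^\ast=U|T|^2U^\ast=U(T^\ast T)U^\ast$, and therefore
$$I-TT^\ast=(I-UU^\ast)+U(I-T^\ast T)U^\ast.$$

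Finally I would observe that both summands are of trace class. The first, $I-UU^\ast$, is a projection of rank $n$, hence finite rank. For the second, $I-T^\ast T$ is a positive operator of trace class by Theorem \ref{th3.1}, and since $U^\ast U=I$ the operator $U(I-T^\ast T)U^\ast$ is again positive and of trace class, with $\operatorname{tr}U(I-T^\ast T)U^\ast=\operatorname{tr}(I-T^\ast T)$ by the cyclicity of the trace. Adding the two summands shows that $I-TT^\ast$ is of trace class, and in fact $\|I-TT^\ast\|_{\mathfrak S_1}=n+\|I-T^\ast T\|_{\mathfrak S_1}$. I do not expect a genuine obstacle here: the only content is the passage from $I-T^\ast T$ to $I-TT^\ast$, which the polar decomposition handles as soon as the two defect spaces are finite-dimensional, and the finiteness $\dim\ker T^\ast=n$ together with $\ker T=\{0\}$ is exactly what the finite-multiplicity hypothesis guarantees (equivalently, one may invoke the standard fact that $T^\ast T$ and $TT^\ast$ have the same nonzero spectrum with equal multiplicities, so that the two traces differ precisely by $\dim\ker T^\ast-\dim\ker T=n$).
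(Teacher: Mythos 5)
Your proof is correct, and it amounts to a self-contained derivation of exactly the fact the paper obtains by citation. The paper's entire proof is to quote the identity $\operatorname{trace}(I-T^\ast T)+\dim\ker T^\ast=\operatorname{trace}(I-TT^\ast)+\dim\ker T$, valid for an arbitrary contraction (reference [VI.3.3] in Bercovici's book), observe that $\dim\ker T^\ast<\infty$, and invoke Theorem \ref{th3.1}. Your computation $I-TT^\ast=(I-UU^\ast)+U(I-T^\ast T)U^\ast$ via the polar decomposition is precisely a proof of that identity in the case $\ker T=\{0\}$, which you correctly extract from $T\prec S_n$ (injectivity of $S_n$ plus injectivity of the intertwining quasiaffinity); the input $\dim\ker T^\ast=n$ comes from Corollary \ref{cor3.2}, the same source the paper relies on. All the steps check out: with trivial kernel the partial isometry $U$ is an isometry, $I-UU^\ast$ is the rank-$n$ projection onto $\ker T^\ast$, and $U(I-T^\ast T)U^\ast$ lies in $\frak S_1$ because $\frak S_1$ is an ideal. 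What your route buys is independence from the external reference and a quantitative refinement, $\|I-TT^\ast\|_{\frak S_1}=n+\|I-T^\ast T\|_{\frak S_1}$, consistent with the cited identity since $\dim\ker T=0$; what the citation buys the paper is generality (the identity holds, with both sides possibly infinite, without assuming $T$ injective) and brevity. Your closing parenthetical --- that $T^\ast T$ and $TT^\ast$ have the same nonzero eigenvalues with multiplicity, so the traces differ by $\dim\ker T^\ast-\dim\ker T$ --- is essentially the content of the cited lemma, so the two arguments rest on the same underlying fact.
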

\begin{proof}  By {\cite[VI.3.3]{2}}, for any contraction $T$ the equality
$$\operatorname{trace}(I-T^\ast T)+\dim\ker T^\ast=
\operatorname{trace}(I-TT^\ast)+\dim\ker T$$
holds. Since $\dim\ker T^\ast<\infty$, Corollary \ref{cor3.3} follows from 
Theorem \ref{th3.1} and the above equality.  \end{proof}

\begin{remark}\label{rem3.4} There exist contractions $T$ such that $T$ 
satisfy to one of the following conditions 
({\it which can not be fulfilled simultaneously}):

(1) $T\prec S_\infty$;

(2) $\mu(T)<\infty $ and $T\sim U$, where $U$ is an a.c. unitary operator;

\noindent and $I-T^\ast T$ is not compact.

To show this, we use the following known fact. Let $T$ be a contraction 
of class $C_{1\cdot}$, and let
 $I-T^\ast T$ be compact. 
Then $T-\lambda I$ is left invertible  for any $\lambda\in\mathbb D$, 
and if
 $T$ is of class $C_{11}$, then $\sigma(T)\subset\mathbb T$ 
(by $\sigma(T)$ we denote the spectrum of 
an operator $T$). 
The proof is contained, for example, in {\cite[the end of Section 2]{20}} 
(although formally in \cite{20} $I-T^\ast T$ is of trace class, 
the proof is the same for a compact
 $I-T^\ast T$). 
An example of a contraction $T$ such that $T$ satifies
  (1) and $T$ is not left invertible
 is contained in \cite{4}. 
  An example of a contraction $T$ such that $T$
 satisfies (2) and 
$\sigma(T)=\operatorname{clos}\mathbb D$ 
 is contained in {\cite[Example 12]{12}}.\end{remark}

\section{On contractions quasisimilar to an isometry}

The following theorem gives a sufficient condition to $I-T^\ast T$ 
be of trace class, if a contraction $T$ is quasisimilar to an isometry.

\begin{theorem}\label{th4.1} Suppose $T$ is an a.c. contraction, 
$V$ is an a.c. isometry, $T$ and $V$ act on 
spaces $\mathcal H$ and $\mathcal K$, respectively, $\mu(T)<\infty$ 
and $\delta\in H^\infty$ is an outer 
function. 
Further, suppose $X:\mathcal H\to\mathcal K$ and $Y:\mathcal K\to\mathcal H$ are 
quasiaffinities such that $XT=VX$, $YV=TY$, $YX=\delta(T)$ and 
$XY=\delta(V)$. Then $I-T^\ast T$ 
is of trace class.\end{theorem}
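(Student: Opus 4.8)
The plan is to reduce Theorem \ref{th4.1} to the already-proved Theorem \ref{th3.1} (or to the $C_0$ machinery used in its proof) by splitting both $T$ and $V$ into an isometric/unitary part and a ``stable'' part governed by the outer function $\delta$. First I would examine the Wold decomposition of the a.c. isometry $V$: since $\mu(T)<\infty$ and $X,Y$ are quasiaffinities, the standard inequalities $\mu(V)\le\mu(T)$ (using $T\buildrel d\over\prec V$ via $X$) force $\mu(V)<\infty$, so $V\cong S_k\oplus\bigoplus_{j=1}^\ell U(\sigma_j)$ with $k,\ell<\infty$. The unitary summands are the benign part; the essential difficulty lives in the shift summand $S_k$, where $\delta(S_k)=\delta(S_1)^{(k)}$ is multiplication by the outer function $\delta$, which is injective with dense range precisely because $\delta$ is outer.

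Next I would exploit the relations $YX=\delta(T)$ and $XY=\delta(V)$ to pin down the asymptotic class of $T$. Because $\delta$ is outer and nonzero, $\delta(T)$ and $\delta(V)$ are quasiaffinities, so $X$ and $Y$ are quasiaffinities implementing a genuine quasisimilarity $T\sim V$; in particular $T$ is of class $C_{1\cdot}$ (it is a quasiaffine transform of the isometry $V$). The key structural step is to separate the $C_{11}$ behaviour (coming from the unitary summands $U(\sigma_j)$) from the $C_{10}$ behaviour (coming from $S_k$). For the part of $T$ quasisimilar to the a.c. unitary $\bigoplus_j U(\sigma_j)$, the functional model together with the fact that $\delta$ is a scalar multiple of the characteristic function should place that part in a situation handled by the unitary-similarity result $\cite{16}$ quoted in the introduction; for the part interacting with $S_k$ one is in the $C_{10}$ regime of Theorem \ref{th3.1}.

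The technical heart is to produce, from the intertwiners $X,Y$ and the outer $\delta$, a \emph{single} invariant/coinvariant splitting of $\mathcal H$ that realizes $T$ (up to the upper-triangular form of Lemma \ref{lem2.3}) as having a piece similar to an isometry of finite multiplicity and a complementary piece of class $C_0$. Concretely, I would let $\mathcal E$ be the closure of the range of $\delta(T)$ restricted appropriately, or pass through the isometric asymptote $T_+^{(a)}$ as in the proof of Theorem \ref{th3.1}, and argue that the compression $T_0=P_{\mathcal E^\perp}T|_{\mathcal E^\perp}$ is annihilated by $\delta$ modulo the model, hence is of class $C_0$ with $\delta(T_0)=0$; finiteness of $\mu(T)\le\mu(T_0)+\mu(T|_{\mathcal E})$ then forces $T_0$ to be quasisimilar to a Jordan operator with $I-J^\ast J$ of finite rank, so $I-T_0^\ast T_0\in\frak S_1$ by $\cite{3}$ (equivalently $\cite[X.8.8]{17}$). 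Meanwhile $T|_{\mathcal E}$ is similar to an isometry of finite multiplicity, so $I-(T|_{\mathcal E})^\ast T|_{\mathcal E}\in\frak S_1$ by Theorem \ref{th2.1}. Invoking Lemma \ref{lem2.3} assembles these into $I-T^\ast T\in\frak S_1$.

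I expect the main obstacle to be exactly the construction of that invariant subspace $\mathcal E$ and the verification that the complementary compression $T_0$ is genuinely of class $C_0$ with $\delta(T_0)=0$: one must show that the outer multiplier $\delta$, which relates $T$ and $V$ only through the two-sided intertwining $YX=\delta(T)$, $XY=\delta(V)$, transfers to an \emph{annihilation} statement on the ``shift-free'' complement rather than merely a quasisimilarity statement. Getting the multiplicities to add correctly (so that the Jordan model for $T_0$ is a genuinely finite orthogonal sum) and handling the mixing between the $U(\sigma_j)$ summands and the $S_k$ summand without losing injectivity or dense range of the intertwiners is where the delicate work will lie; everything afterward is a routine application of Theorem \ref{th2.1}, Lemma \ref{lem2.3}, and the $C_0$ trace-class invariance of $\cite{3}$.
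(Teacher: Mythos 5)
Your outline correctly reduces to $V\cong S_k\oplus\bigoplus_{j=1}^\ell U(\sigma_j)$ with $k,\ell<\infty$ and correctly anticipates the final assembly via Lemma \ref{lem2.3}, but the technical heart you propose is wrong, and you have in fact flagged the exact spot where it breaks. The claim that the compression $T_0=P_{{\mathcal E}^\perp}T|_{{\mathcal E}^\perp}$ is of class $C_0$ with $\delta(T_0)=0$ cannot hold whenever $\ell\geq 1$. Test it on $T=V=U_1$, $\delta\equiv 1$, $X=Y=I$: for the natural invariant subspace $\mathcal E=H^2$ one gets $T_0\cong S_1^\ast$, which is of class $C_{01}$ and is annihilated by no nonzero $H^\infty$ function. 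Structurally the obstruction is twofold: first, a $C_0$ operator has an inner minimal function, which can never divide the \emph{outer} function $\delta$, so $\delta(T_0)=0$ would force ${\mathcal E}^\perp=\{0\}$; second, the $C_0$ argument in the proof of Theorem \ref{th3.1} lives entirely in the $C_{10}$ regime ($T\prec S_n$ there), whereas here the unitary summands $U(\sigma_j)$ of $V$ inject $C_{\cdot 1}$ behaviour into the complement of any shift-type invariant subspace. Your fallback for that unitary part --- treating the piece of $T$ quasisimilar to $\bigoplus_j U(\sigma_j)$ via the result of \cite{16} --- also fails: \cite{16} requires \emph{similarity} to a unitary, and Remark \ref{rem3.4}(2) of this very paper exhibits a contraction $T$ with $\mu(T)<\infty$ and $T\sim U$ for an a.c.\ unitary $U$ such that $I-T^\ast T$ is not even compact. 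So quasisimilarity plus finite multiplicity is genuinely insufficient, and the two-sided factorizations $YX=\delta(T)$, $XY=\delta(V)$ must be used quantitatively, which your outline never does.

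The paper's actual proof uses $\delta$ twice, in both directions, and needs no $C_0$ step at all. After the same reduction (plus an augmentation $T\oplus\bigoplus_j U(\mathbb T\setminus\sigma_j)$ to replace $\bigoplus_j U(\sigma_j)$ by $U_\ell$, so that $V=S_k\oplus U_\ell$ on $\mathcal K=H^2_k\oplus L^2_\ell$), it takes $\mathcal E=\operatorname{clos}Y(H^2_k\oplus H^2_\ell)$. Outerness of $\delta$ gives
$$\operatorname{clos}X\mathcal E=\operatorname{clos}XY(H^2_k\oplus H^2_\ell)=\operatorname{clos}\delta(V)(H^2_k\oplus H^2_\ell)=H^2_k\oplus H^2_\ell,$$
so $T|_{\mathcal E}\prec S_{k+\ell}$ and Theorem \ref{th3.1} handles the restriction. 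For the complement, outerness is used a second time to prove $\operatorname{clos}Y^\ast{\mathcal E}^\perp=\{0\}\oplus(H^2_-)_\ell$ (if $g\in(H^2_-)_\ell$ is orthogonal to $Y^\ast{\mathcal E}^\perp$, then $\delta(V)(0\oplus g)\in H^2_k\oplus H^2_\ell$ forces $g=0$), whence $T^\ast|_{{\mathcal E}^\perp}\prec V^\ast|_{\{0\}\oplus(H^2_-)_\ell}\cong S_\ell$ via $Y^\ast|_{{\mathcal E}^\perp}$; Corollary \ref{cor3.3} --- not a Jordan-model argument --- then gives $I-(P_{{\mathcal E}^\perp}T|_{{\mathcal E}^\perp})^\ast(P_{{\mathcal E}^\perp}T|_{{\mathcal E}^\perp})\in\frak S_1$, and Lemma \ref{lem2.3} finishes. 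In short: the complementary compression must be attacked through its \emph{adjoint}, as a quasiaffine transform of a unilateral shift, not as a $C_0$ operator; to repair your proposal you should replace the entire annihilation step by this adjoint argument.
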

\begin{proof} Since $T\sim V$, we have $\mu(T)=\mu(V)<\infty$. 
Therefore, there exist nonnegative integers  
$k$, $\ell$, $0\leq k,\ell<\infty$, 
and Borel sets $\sigma_j$, $j=1, \dots, \ell$, such that 
$\mathbb T\supset\sigma_1\supset\dots\supset\sigma_\ell$ and
$$V\cong S_k\oplus\bigoplus_{j=1}^\ell U(\sigma_j).$$
We put 
$$U^\prime=\bigoplus_{j=1}^\ell U(\mathbb T\setminus\sigma_j), 
\ \ T^\prime=T\oplus U^\prime, 
\ \ V^\prime=V\oplus U^\prime, 
\ \ X^\prime=X\oplus\delta(U^\prime), \ \ Y^\prime=Y\oplus I.$$
Then $T^\prime$, $V^\prime$, $X^\prime$, $Y^\prime$ satisfy 
the conditions of Theorem \ref{th4.1}, and
 $I-T^\ast T$ is of trace class 
if and only $I-T^{\prime\ast}T^\prime$ is of trace class. Further,
$V^\prime\cong S_k\oplus U_\ell$, and we can replace $V^\prime$ 
by $S_k\oplus U_\ell$. Thus, 
it is sufficient to prove Theorem \ref{th4.1} 
for $V=S_k\oplus U_\ell$, where $0\leq k, \ell<\infty$.

Now we suppose $V=S_k\oplus U_\ell$ and $\mathcal K=H^2_k\oplus L^2_\ell$, 
where $0\leq k,\ell<\infty$.
We put $\mathcal E=\operatorname{clos}Y(H^2_k\oplus H^2_\ell)$, 
where $H^2_\ell\subset L^2_\ell$.
Clearly, $T\mathcal E\subset\mathcal E$. We have
\begin{align*}\operatorname{clos}X\mathcal E & =
\operatorname{clos}X\operatorname{clos}Y(H^2_k\oplus H^2_\ell)=
\operatorname{clos}XY(H^2_k\oplus H^2_\ell) \\&
=\operatorname{clos}\delta(V)(H^2_k\oplus H^2_\ell)=
H^2_k\oplus H^2_\ell, \end{align*}
because  $\delta$  is  outer.  Thus,  we  conclude that  
$T|_{\mathcal E}\prec S_{k+\ell}$, and, by Theorem \ref{th3.1}, 
$I-(T|_{\mathcal E})^\ast T|_{\mathcal E}$ is of trace class.

We shall show that 
$$\operatorname{clos}Y^\ast{\mathcal E}^\perp=\{0\}\oplus(H^2_-)_\ell.$$
First, let $f\in{\mathcal E}^\perp$, and let $h\in H^2_k\oplus H^2_\ell$. 
Then $(Y^\ast f,h)=(f,Yh)=0$,
therefore,  $Y^\ast{\mathcal E}^\perp\subset\{0\}\oplus(H^2_-)_\ell$. 
Further, let $g\in (H^2_-)_\ell$
be such that $(Y^\ast f, 0\oplus g)=0$ for every 
$f\in{\mathcal E}^\perp$. Then $Y(0\oplus g)\in\mathcal E$,
therefore, 
$$\delta(V)(0\oplus g)=XY(0\oplus g)\in H^2_k\oplus H^2_\ell.$$
Since $V(\{0\}\oplus L^2_\ell)\subset \{0\}\oplus L^2_\ell$, 
we have $\delta(V)g\in H^2_\ell$.
But $\delta$ is outer, and we conclude that $g=0$.

Thus,
 $$T^\ast|_{{\mathcal E}^\perp}\prec V^\ast|_{\{0\}\oplus(H^2_-)_\ell}, $$
where the relation $\prec$ is realized by $Y^\ast|_{{\mathcal E}^\perp}$. 
Since $V^\ast|_{\{0\}\oplus (H^2_-)_\ell}\cong S_\ell$, 
by Corollary \ref{cor3.3} we conclude that 
$I-(T^\ast |_{{\mathcal E}^\perp})(T^\ast|_{{\mathcal E}^\perp})^\ast$
is of trace class. Further, 
$$I-(T^\ast|_{{\mathcal E}^\perp})(T^\ast|_{{\mathcal E}^\perp})^\ast=
I-(P_{{\mathcal E}^\perp}T|_{{\mathcal E}^\perp})^\ast
(P_{{\mathcal E}^\perp}T|_{{\mathcal E}^\perp}),$$
where $P_{{\mathcal E}^\perp}$ is the orthogonal projection from $\mathcal H$ 
onto ${\mathcal E}^\perp$.
Finally, we apply Lemma \ref{lem2.3} to a triangulation of $T$ with respect 
to the decomposition  
$\mathcal H=\mathcal E\oplus {\mathcal E}^\perp$, 
and we conclude that $I-T^\ast T$ is of trace class.  \end{proof}

\begin{corollary}\label{cor4.2} Suppose $T$ is an a.c. contraction,
 $\mu(T)<\infty$, and the characteristic function
 of $T$ has an outer left scalar multiple. 
 Then $I-T^\ast T$ is of trace class.\end{corollary}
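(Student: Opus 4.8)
The plan is to deduce this from Theorem \ref{th4.1} by manufacturing, out of the analytic hypothesis on the characteristic function, exactly the operator-theoretic data that theorem requires: an a.c.\ isometry $V$ acting on some $\mathcal K$ and quasiaffinities $X\colon\mathcal H\to\mathcal K$, $Y\colon\mathcal K\to\mathcal H$ with $XT=VX$, $YV=TY$, $YX=\delta(T)$, and $XY=\delta(V)$, where $\delta$ is the given outer function. Once $V,X,Y$ are in hand, the hypothesis $\mu(T)<\infty$ is exactly the remaining assumption of Theorem \ref{th4.1}, so that theorem applies verbatim and yields $I-T^\ast T\in\frak S_1$ (note that Theorem \ref{th4.1} itself recovers $\mu(V)=\mu(T)<\infty$ internally, so no separate multiplicity bookkeeping is needed). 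Thus the entire task is to translate ``the characteristic function of $T$ has an outer left scalar multiple $\delta$'' into this intertwining picture.

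First I would unpack the hypothesis in the Sz.-Nagy--Foias functional model. Writing $\Theta_T$ for the characteristic function of $T$, an outer left scalar multiple is a bounded analytic $\Omega$ with $\Omega\Theta_T=\delta I$ and $\delta$ outer. This is the same kind of data that governs the equivalence $(1)\Longleftrightarrow(3)$ of Corollary \ref{cor3.2} via \cite{21}, and the dictionary between scalar multiples and intertwiners with shifts is spelled out in \cite{gamal} and \cite{17}. On the model side, $\Theta_T$ produces the map $X$ realizing $T\prec V$ into the model isometry, while the relation $\Omega\Theta_T=\delta I$ reads off as $YX=\delta(T)$ for a second, reverse intertwiner $Y$ whose symbol is $\Omega$. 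I expect outerness of $\delta$ to be used twice here: it should force $T$ to be of class $C_{1\cdot}$ (so that $V$ is a genuine isometry and $X$ is injective), and it makes $\delta(T)$ and $\delta(V)$ have dense ranges, which is what promotes $X$ and $Y$ from mere bounded intertwiners to quasiaffinities.

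The step I expect to be the main obstacle is producing the reverse intertwiner $Y$ together with the companion identity $XY=\delta(V)$. The hypothesis is a priori one-sided: a \emph{left} multiple yields directly only the map $X$ with $T\prec V$, as in Corollary \ref{cor3.2}, whereas Theorem \ref{th4.1} demands the full quasisimilarity and \emph{both} product relations $YX=\delta(T)$ and $XY=\delta(V)$. This is precisely where outerness does the work: because $\delta$ is outer, I expect the multiplier $\Omega$ to be upgradable to a matching relation on the residual (isometric) part of the model, furnishing $Y$ with $YV=TY$ and the identity $XY=\delta(V)$ on $\mathcal K$. Concretely I would take $V$ to be the isometric asymptote $T^{(a)}_+$ of $T$, realize $X$ and $Y$ as the model multipliers associated with $\Theta_T$ and $\Omega$ respectively, and verify the two product relations by evaluating the compositions in the model, invoking the outerness of $\delta$ to pass from $\Omega\Theta_T=\delta I$ to the companion identity on the $V$-side. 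With $V,X,Y,\delta$ constructed in this way, Theorem \ref{th4.1} completes the proof.
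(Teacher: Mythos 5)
Your reduction coincides with the paper's own proof: the paper likewise deduces the corollary from Theorem~\ref{th4.1} by taking $V=T_+^{(a)}$, the isometric asymptote of $T$, together with $\delta$ the given outer left scalar multiple. The difference lies in how the hypotheses of Theorem~\ref{th4.1} are certified. The paper's entire proof is a citation of \cite[Theorem~4.14]{10}, which asserts exactly the package you set out to build: quasiaffinities $X,Y$ intertwining $T$ with $T_+^{(a)}$ and satisfying $YX=\delta(T)$, $XY=\delta(V)$. You correctly identify the crux --- the hypothesis $\Omega\Theta_T=\delta I$ is one-sided, whereas Theorem~\ref{th4.1} needs \emph{both} product identities --- and your surrounding remarks are sound (for an a.c.\ contraction and outer $\delta$, $\delta(T)$ and $\delta(V)$ are quasiaffinities, so the product identities automatically promote $X$ and $Y$ to quasiaffinities; and Theorem~\ref{th4.1} indeed handles the multiplicity internally). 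But your resolution of the crux is only announced, not executed: ``I expect the multiplier $\Omega$ to be upgradable to a matching relation on the residual part'' is a statement of intent, and passing from the one-sided relation to the reverse intertwiner $Y$ with $XY=\delta(V)$ is a genuine theorem rather than a routine model computation --- it is precisely the content of \cite[Theorem~4.14]{10}, whose companion \cite[Lemma~4.16]{10} (the converse direction) is what Remark~\ref{rem4.3} cites. So, as a self-contained argument, your proposal has a hole at exactly the point where the paper inserts a citation; modulo that known result, it is the paper's proof.
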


(For the characteristic function of  a contraction we refer to \cite{17}.)

\begin{proof} Let $\delta$ be an outer left scalar multiple
  of the characteristic function of $T$, and let 
$V=T_+^{(a)}$ be the isometric asymptote of $T$. By {\cite[Theorem 4.14]{10}},
 $T$, $V$, and $\delta $ satisfy
 to the conditions of Theorem \ref{th4.1}. \end{proof}

\begin{remark}\label{rem4.3}  If $T$ and $\delta$ satisfy to the conditions of 
Theorem \ref{th4.1}, then $\delta$ is a left 
scalar multiple of the 
characteristic function of $T$. Proof is contained in {\cite[Lemma 4.16]{10}}.\end{remark}

\begin{remark}\label{rem4.4} There exist contractions $T$ such that $T\sim S_1$ and 
$$\sup_{\lambda\in\mathbb D}\|I-b_\lambda(T)^\ast b_\lambda(T)\|_{\frak S_1}=\infty$$
(sf. Theorem \ref{th2.1}), see \cite{treil} and {\cite[Lemma 5.9, Corollary 5.10 and Remark after it]{gamal}}.\end{remark}

\end{document}